\documentclass[11pt,a4paper]{article}
\usepackage{amsmath,amssymb,latexsym,amsthm}  
\usepackage[english]{babel}
\usepackage[latin2]{inputenc}

\newtheorem{theorem}{Theorem}

\newtheorem{lemma}[theorem]{Lemma}

\begin{document}

\title{Three-dimensional loops as sections in a four-dimensional solvable Lie group}
\author{\'Agota Figula} 
\date{}
\maketitle

\begin{abstract} We classify all three-dimensional connected topological loops such that the group topologically generated by their left translations is the four-dimensional connected Lie group $G$ which has trivial center and precisely two one-dimensional normal subgroups. We show that $G$ is not the multiplication group of connected topological proper loops.  
\end{abstract}

\noindent
{\footnotesize {2010 {\em Mathematics Subject Classification:} 57M60, 20N05, 22E25, 22F30}}

\noindent
{\footnotesize {2010 {\em Keywords:} Topological loops, sharply transitive sections in groups, multiplication group of loops, solvable Lie groups.}}

\noindent
{\footnotesize {\em Acknowledgments:} Sincere thanks to the referee for the careful reading of the manuscript and for many suggestions. This paper was supported by the Hungarian Scientific Research Fund (OTKA) Grant PD 77392 and by the EEA and Norway Grants (Zolt\'an Magyary Higher Education Public Foundation). }

\section{Introduction} 
A loop $(L, \cdot )$ is a quasigroup with identity element $e \in L$. The left translations $\lambda _a: x \mapsto a x$ and the right translations 
$\rho _a: x \mapsto x a$, $a \in L$, generate a permutation group on the set $L$. This group is called the multiplication group $Mult(L)$ of $L$ (cf. \cite{bruck}). The subgroup $G$ of $Mult(L)$ generated by all left translations of $L$ is the group of left translations of $L$.  The stabilizer $Inn(L)$ of $e \in L$ in the group $Mult(L)$ is called the inner mapping group of $L$. The multiplication group and the inner mapping group of $L$ are essential tools for investigating the structure of the loop $L$. 
An important problem is to be analyzed under which circumstances a group is the multiplication group of a loop. The answer to this question was given in \cite{kepka} which says that we can use certain conditions for transversals. These conditions were applied to study the structures of the multiplication groups and the inner mapping groups of finite loops (cf. \cite{niem3}, \cite{niem4}, \cite{niem1}, \cite{niem2}, \cite{vesanen}).     

Another relevant problem is the classification of loops $L$ having a group $G$ as the group of the left translations of $L$. This problem is equivalent to the following problem in group theory:  to find the subgroups $H$ of $G$ such that the core of $H$ in $G$ is trivial and after this to determine all sections 
$\sigma : G/H \to G$ such that $\sigma (H)=1 \in G$, $\sigma (G/H)$ generates $G$ and acts sharply transitively on the left cosets $x H$, $x \in G$.  The last property means that 
for given cosets $g_1 H$, $g_2 H$ there exists precisely one $z \in \sigma (G/H)$ which satisfies the equation $z g_1 H=g_2 H$. In particular if
 $G$ is a connected Lie group and the section $\sigma $ is continuous with the above property (it is called continuous sharply transitive  section), then we obtain all connected topological loops $L$ such that $G$ is the group of left translations of $L$ (cf. \cite{loops}, Section 1).  
In this paper we investigate connected topological loops. 
 
Concrete classifications of $2$-dimensional connected topological loops $L$ having a Lie group of dimension $3$ as the group of left translations of $L$ are given  in  \cite{loops}, Section 22 and 23, and in \cite{figula0}. The Lie groups which are multiplication groups of $2$-dimensional topological loops are determined in \cite{figula00}. In \cite{figula} we classify all $3$-dimensional simply connected topological loops $L$ having a $4$-dimensional nilpotent Lie group as the group of left translations of $L$. Moreover, we prove that these groups are not multiplication groups of $3$-dimensional topological loops. 

In recent research we deal with the class of solvable Lie groups and study which groups in this class occur as the group of left translations respectively as the multiplication group of $3$-dimensional topological loops. The solvable Lie groups with non-trivial centre play an important role in the investigation of the multiplication group of $3$-dimensional loops (cf. \cite{figula2}). In contrast to this here we consider the $4$-dimensional solvable  Lie group $G$ which has trivial centre and precisely two $1$-dimensional normal subgroups. It depends on a real parameter $a \neq 0$. Its Lie algebra  has two kinds of paracomplex structures (cf. \cite{andrada}, Section 2) and admits only for $a=1$ an invariant complex structure (cf. \cite{ovando}, Theorem 5, pp. 24-25).      

In this paper we classify the $3$-dimensional connected topological loops $L$ which are sections in the Lie group $G$.   
These loops $L$ are homeomorphic to $\mathbb R^3$ and their multiplications depend on one continuous real function of two or three variables 
(cf. Theorem \ref{Propelso}). We give an easy proof of the fact that $G$ is not the multiplication group of connected topological proper loops 
(cf. Theorem \ref{Propmasodik}). 
Later we intend to extend our research to other $4$-dimensional Lie groups.

\section{Preliminaries} 
A binary system $(L, \cdot )$ is called a loop if there exists an element
$e \in L$ such that $x=e \cdot x=x \cdot e$ holds for all $x \in L$ and the
equations $a \cdot y=b$ and $x \cdot a=b$ have precisely one solution, which we denote by $y=a \backslash b$ and $x=b/a$. A loop $L$ is proper if it is not a group.

The left and right translations $\lambda _a: y \mapsto a \cdot y:L \times L \to L$ and $\rho _a: y \mapsto y \cdot a: L \times L \to L$, $a \in L$, are bijections of $L$. The permutation group $Mult(L)$ generated by all left and right translations of the loop $L$ is called the multiplication group of $L$ and the stabilizer of $e \in L$ in the group $Mult(L)$ is called the inner mapping group $Inn(L)$ of $L$. 

Let $K$ be a group, let $S \le K$. Denote by $C_K(S)$ the core of $S$ in $K$ (the largest normal subgroup of $K$ contained in $S$).  A group $K$ is isomorphic to the multiplication group of a loop if and only if there exists a subgroup $S$ with $C_K(S)=1$ and two left transversals $A$ and $B$ to $S$ in $K$ such that $a^{-1} b^{-1} a b \in S$ for every $a \in A$ and $b \in B$ and $K=\langle A, B \rangle $ (cf. Theorem 4.1 in \cite{kepka}, p. 118). Proposition 2.7 in \cite{kepka}, p. 114, yields the following

\begin{lemma} \label{niemenmaa} Let $L$ be a loop with multiplication group $Mult(L)$ and inner mapping group $Inn(L)$. Then 
the normalizer $N_{Mult(L)}(Inn(L))$ is the direct product $Inn(L) \times Z(Mult(L))$, where $Z(Mult(L))$ is the center of the group $Mult(L)$. 
\end{lemma}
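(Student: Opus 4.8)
The plan is to prove the two inclusions $I \times Z(M) \subseteq N_{Mult(L)}(Inn(L))$ and $N_{Mult(L)}(Inn(L)) \subseteq I \times Z(M)$ separately, where I abbreviate $M = Mult(L)$, $I = Inn(L)$ and $N = N_M(I)$. Throughout I would exploit that $M$ acts faithfully and transitively on $L$ with stabilizer $M_e = I$, so that the core $C_M(I) = \bigcap_{g \in M} g I g^{-1}$, being the kernel of this faithful action, is trivial, together with the elementary permutation-group identity $g M_x g^{-1} = M_{g(x)}$ for all $g \in M$, $x \in L$.

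For the easy inclusion I would first note that $Z(M)$ normalizes $I$ (it is normal in $M$) and in fact centralizes $I$, while $I$ normalizes itself; hence $I\,Z(M) \subseteq N$. To see that the product is direct, observe that $I \cap Z(M)$ is a central, hence normal, subgroup of $M$ contained in $I$, so it lies in $C_M(I) = 1$. Combined with the elementwise commuting of $Z(M)$ with $I$, this gives the internal direct product $I \times Z(M) \subseteq N$.

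The substantive inclusion is $N \subseteq I \times Z(M)$. Given $n \in N$, I would set $a := n(e)$ and compute $M_a = M_{n(e)} = n M_e n^{-1} = n I n^{-1} = I$, so that $I$ fixes the point $a$. Writing $n = \lambda_a\,(\lambda_a^{-1} n)$, the second factor lies in $M_e = I$, since $\lambda_a^{-1} n(e) = \lambda_a^{-1}(a) = e$; thus everything reduces to showing $\lambda_a \in Z(M)$. Here the loop structure enters: using that $Inn(L)$ is generated by the standard inner mappings $\lambda_{xy}^{-1}\lambda_x\lambda_y$, $\rho_{xy}^{-1}\rho_y\rho_x$ and $\lambda_x^{-1}\rho_x$, the fact that each of these fixes $a$ translates respectively into $x(ya) = (xy)a$, $(ax)y = a(xy)$ and $ax = xa$ for all $x,y \in L$; that is, $a$ lies in the left and right nuclei and commutes with every element, and a short computation then places $a$ in the middle nucleus as well, so $a$ is central in the loop $L$.

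Finally I would verify directly from these nucleus and commutativity identities that $\lambda_a$ commutes with every $\lambda_x$ and every $\rho_x$ (for example $\lambda_a\lambda_x(y) = a(xy) = (ax)y = (xa)y = x(ay) = \lambda_x\lambda_a(y)$), whence $\lambda_a$ commutes with all generators of $M$ and so $\lambda_a \in Z(M)$. Then $n = \lambda_a(\lambda_a^{-1} n) \in Z(M)\,I = I \times Z(M)$, which closes the argument. The main obstacle is exactly this middle step: converting the purely permutation-theoretic hypothesis that $I$ fixes $a$ into the loop-theoretic conclusion that $a$ is central and that the corresponding left translation is central in $M$; once the generators of $Inn(L)$ and the identity $g M_x g^{-1} = M_{g(x)}$ are available, the remaining verifications are routine.
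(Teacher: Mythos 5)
Your proof is correct, but note that the paper does not actually contain a proof of this lemma: it is imported as Proposition 2.7 of \cite{kepka}, so the honest comparison is with that source rather than with an in-paper argument. Your reconstruction is essentially the classical Niemenmaa--Kepka argument, carried out in full: the easy half uses that $C_{Mult(L)}(Inn(L))$ is the kernel of the faithful transitive action of $Mult(L)$ on $L$, hence trivial, so $Inn(L)\cap Z(Mult(L))=1$ and the product is direct; the substantive half takes $n\in N_{Mult(L)}(Inn(L))$, sets $a=n(e)$, uses $nM_en^{-1}=M_{n(e)}$ to conclude that $Inn(L)$ fixes $a$, converts the fixed-point conditions for the mappings $\lambda_{xy}^{-1}\lambda_x\lambda_y$, $\rho_{xy}^{-1}\rho_y\rho_x$, $\lambda_x^{-1}\rho_x$ into $a$ being a central element of the loop, and concludes $\lambda_a\in Z(Mult(L))$ and $n=\lambda_a(\lambda_a^{-1}n)\in Z(Mult(L))\,Inn(L)$. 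I checked the delicate points and they hold: the middle-nucleus derivation $x(ay)=x(ya)=(xy)a=a(xy)=(ax)y=(xa)y$ is valid given $a\in N_\lambda\cap N_\rho$ and $ax=xa$, and it is exactly what makes $\lambda_a$ commute with every $\lambda_x$ (commuting with every $\rho_x$ needs only $a\in N_\lambda$). One simplification you could make: you never need the nontrivial fact that these three families \emph{generate} $Inn(L)$; since each of them fixes $e$ it lies in $M_e=Inn(L)$, and $Inn(L)$ fixes $a$, which is all your argument uses. What your self-contained proof buys is that the lemma, which carries the entire weight of Theorem~\ref{Propmasodik}, becomes verifiable inside the paper, and it implicitly establishes one inclusion of the classical identification $Z(Mult(L))=\{\lambda_a : a\in Z(L)\}$ going back to Bruck; what the paper's citation buys is brevity.
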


A loop $L$ is called topological  if $L$ is a topological space and the binary operations
$(x,y) \mapsto x \cdot y, \ (x,y) \mapsto x \backslash y, (x,y) \mapsto y/x :L \times L \to L$  are continuous. 
Let $G$ be a connected Lie group, $H$ be a subgroup of $G$.  Let $\sigma :G/H \to G$ be a continuous section with respect to the natural projection 
$G \to G/H$.  This is called continuous sharply transitive section, if the set $\sigma (G/H)$ operates sharply transitively on $G/H$, which means that to any $g_1 H$ and $g_2 H$ there exists precisely one $z \in \sigma (G/H)$ with 
$z x H= y H$.     
Every connected topological loop having a Lie group $G$ as the group topologically generated by its left translations is isomorphic to a loop $L$ realized on the factor space $G/H$, where $H$ is a closed subgroup of $G$ with $C_G(H)=1$ and $\sigma :G/H \to G$ is  a continuous sharply transitive section with $\sigma (H)=1 \in G$ such that the subset 
$\sigma (G/H)$ generates $G$. 
The multiplication  of  $L$ on the manifold  $G/H$  is  defined by
$x H \ast y H=\sigma (x H) y H$. Moreover, the subgroup $H$ is the stabilizer of the identity element
$e \in L$ in the group $G$. 

\noindent 
Since there does not exist a multiplication with identity on the sphere $S^2$ a simply connected $3$-dimensional topological loop $L$ having a Lie group $G$ as the group topologically generated by its left translations is homeomorphic either to $\mathbb R^3$ or to $S^3$ (see \cite{gorbatsevich}, p. 210). If the group $G$ is solvable, then the simply connected loop $L$ must be homeomorphic to $\mathbb R^3$ because of the sphere $S^3$ is not a solvmanifold (cf. Theorem 3.2 in \cite{gorbatsevich}, p. 208).  

\noindent
The kernel of a homomorphism $\alpha :(L, \cdot ) \to (L', \ast )$ of a loop $L$ into a loop $L'$ is a normal subloop $N$ of $L$, i.e. a subloop of $L$ such that
\[ x \cdot N=N \cdot x, \ \ (x \cdot N) \cdot y= x \cdot (N \cdot y), \ \ x \cdot ( y \cdot N)=(x \cdot y) \cdot N. \] 
A loop $L$ is solvable if it has a series $1=L_0 \le L_1 \le \cdots \le L_n=L$, where $L_{i-1}$ is normal in $L_i$ and 
$L_i/L_{i-1}$ is an abelian group, $i=1, \cdots ,n$.

\section{Three-dimensional topological loops as sections\\
in a $4$-dimensional solvable Lie group} 
A list of the $4$-dimensional indecomposable Lie algebras can be found in \cite{patera}, Table I, p. 988. All these Lie algebras are solvable. 
Now we consider the Lie algebra $A_{4,2}^a$, $a \neq 0$, in Table I in \cite{patera}. This Lie algebra has a codimension 
$1$ abelian ideal and it has 
a representation as subalgebra of ${\bf gl}_4(\mathbb R)$.
In this section we classify the $3$-dimensional connected topological loops such that the Lie algebra of the group $G$ topologically generated by their left translations is $A_{4,2}^a$, $a \neq 0$. The multiplication of the loops in this class depends on a continuous real function of two or three variables. We prove that the group $G$ cannot be the multiplication group of connected topological proper loops.  
We often use the following lemma.  

\begin{lemma} \label{functional} Let $f: \mathbb R \to \mathbb R$ be a continuous function such that for all $z_1, z_2 \in \mathbb R$ we have 
\begin{equation} \label{equegyenlet} f(z_2)+ e^{-z_2} f(z_1) = f(z_1+z_2). \end{equation} 
Then $f(z)=K(1-e^{-z})$, where $K$ is a real constant.  
\end{lemma}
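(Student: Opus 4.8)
The plan is to solve the functional equation (\ref{equegyenlet}) directly rather than merely to verify that the proposed form satisfies it, and the key leverage is the commutativity of addition on $\mathbb{R}$. First I would record the normalization $f(0)=0$, obtained by setting $z_1=z_2=0$ in (\ref{equegyenlet}); this already agrees with the claimed formula at the origin, since $K(1-e^{0})=0$.

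The central observation is that the left-hand side of (\ref{equegyenlet}) is an asymmetric combination of $f(z_1)$ and $f(z_2)$, whereas the right-hand side $f(z_1+z_2)$ is symmetric in $z_1$ and $z_2$. Writing the equation a second time with the roles of $z_1$ and $z_2$ interchanged and equating the two resulting expressions for $f(z_1+z_2)$ gives
\[ f(z_2)+e^{-z_2}f(z_1)=f(z_1)+e^{-z_1}f(z_2), \]
which rearranges to $f(z_2)\bigl(1-e^{-z_1}\bigr)=f(z_1)\bigl(1-e^{-z_2}\bigr)$ for all $z_1,z_2\in\mathbb{R}$.

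From here I would restrict to $z_1,z_2\neq 0$, where the factors $1-e^{-z}$ are nonzero, and divide to conclude that the quotient $f(z)/\bigl(1-e^{-z}\bigr)$ takes one and the same value $K$ for every $z\neq 0$; hence $f(z)=K(1-e^{-z})$ on $\mathbb{R}\setminus\{0\}$. Together with the already established $f(0)=0$, this yields the asserted formula on all of $\mathbb{R}$.

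The only delicate point — and the single place where the argument could stumble — is the degenerate value $z=0$, at which $1-e^{-z}$ vanishes and the division is illegitimate; this is exactly why the normalization $f(0)=0$ is isolated at the outset and handled separately. I would note that this symmetry argument produces the constant $K$ without ever using the continuity of $f$, so continuity is not strictly needed for the conclusion, although it is of course available in the topological loop setting in which the lemma is applied.
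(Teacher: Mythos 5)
Your proof is correct and follows essentially the same route as the paper: exploiting the symmetry of $f(z_1+z_2)$ under interchange of $z_1$ and $z_2$ to obtain $f(z_1)(1-e^{-z_2})=f(z_2)(1-e^{-z_1})$ and dividing for $z_1,z_2\neq 0$. Your treatment is slightly more complete than the paper's, since you explicitly handle the degenerate point $z=0$ via the normalization $f(0)=0$ and correctly observe that continuity is never actually used.
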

\begin{proof}
Interchanging $z_1$ and $z_2$ in (\ref{equegyenlet}) we get $f(z_1)+ e^{-z_1} f(z_2) = f(z_1+z_2)$. The right hand side of the last equation is equal to the right hand side of equation (\ref{equegyenlet}). Hence for $z_1 z_2 \neq 0$  one has $\frac{f(z_1)}{1-e^{-z_1}} = \frac{f(z_2)}{1-e^{-z_2}}= K$ 
for a suitable constant $K \in \mathbb R$ and the assertion follows.  
\end{proof}

\begin{theorem} \label{Propelso} Let $G$ be the four-dimensional connected solvable Lie group with trivial center having precisely two one-dimensional normal subgroups and choose for $G$ the representation 
\[ G= \left\{ g(x_1,x_2,x_3,x_4)= \left( \begin{array}{cccc}
e^{a x_4} & 0 & 0 & x_1 \\
0 & e^{x_4} & x_4 e^{x_4} & x_2 \\
0 & 0 &  e^{x_4} & x_3 \\
0 & 0 & 0 & 1 \end{array} \right), x_i \in \mathbb R, i=1,2,3,4 \right\}  \]
with fixed number $a \in \mathbb R \setminus \{ 0 \}$. Let $H$ be a one-dimensional subgroup of $G$ which is not normal in $G$. 
If $H$ is not contained in the commutator subgroup $G'$ of $G$, then using automorphism of $G$ we may choose 
$H= \{ g(0,0,0,x_4); x_4 \in \mathbb R \}$. There does not exist connected topological loop $L$ having $G$ as the group topologically generated by the left translations of $L$ and $H$ as the stabilizer of $e \in L$.  
\newline
\noindent 
If $H$ is contained in the commutator subgroup of $G$ and $a \neq 1$, then using automorphisms of $G$ we may choose $H$ as one of the following subgroups:
\[ H_1= \{ g(0,0,x_3,0); x_3 \in \mathbb R \}, \ H_2= \{ g(x_1,0,x_1,0); x_1 \in \mathbb R \}, \]
\[ H_3=\{ g(x_1,x_1,0,0); x_1 \in \mathbb R \}. \]
For $a=1$ using automorphisms of $G$ we may assume that $H=H_1$.  
\newline
\noindent
a) Every continuous sharply transitive section $\sigma : G/H_1 \to G$ with the properties that $\sigma (G/H_1)$ generates $G$ and 
$\sigma (H_1)=1$ is given by the map $\sigma _f: g(x,y,0,z) H_1 \mapsto g(x,y+z e^{z} f(x,z), e^{z} f(x,z),z)$, 
where $f: \mathbb R^2 \to \mathbb R$ is a continuous function with $f(0,0)=0$ such that the function $f$ does not fulfill the identities $f(x,0)=0$ and $f(0,z)=K(1-e^{-z})$, $K \in \mathbb R$, simultaneously. 
The multiplication of the loop $L_f$ corresponding to $\sigma _f$ can be written as 
\begin{equation} \label{elsoszorzas} (x_1,y_1,z_1) \ast (x_2,y_2,z_2)=(x_1+ e^{a z_1} x_2, y_1+y_2 e^{z_1}-z_2 e^{z_1} f(x_1,z_1), z_1+z_2). \end{equation}  
Every loop $L_f$ defined by (\ref{elsoszorzas}) is solvable. 
\newline
\noindent
b) Each continuous sharply transitive section $\sigma : G/H_2 \to G$ such that $\sigma (G/H_2)$ generates $G$ and 
$\sigma (H_2)=1$ has the form  
\[ \sigma _h: g(x,y,0,z) H_2 \mapsto g(x+e^{az} h(x,y,z),y+z e^{z} h(x,y,z), e^z h(x,y,z),z), \]  
$a \neq 1$, where $h: \mathbb R^3 \to \mathbb R$ is a continuous function with $h(0,0,0)=0$ such that $h$ does not satisfy the identities $h(x,y,0)=0$ and $h(0,0,z)=K(1-e^{-z})$, $K \in \mathbb R$, simultaneously and such that 
for all triples $(x_1,y_1,z_1)$ and $(x_2,y_2,z_2) \in \mathbb R^3$ the equations 
\begin{equation} \label{masodikequ} y= y_2- e^{z_2-z_1} y_1+ e^{z_2-z_1} z_1 h(x,y,z_2-z_1), \end{equation} 
\begin{equation} \label{harmadikequ} x= x_2 -x_1 e^{a(z_2-z_1)}+ e^{a z_2} (e^{-z_1}-e^{-a z_1}) h(x,y,z_2-z_1) \end{equation}
have a unique solution $(x,y) \in \mathbb R^2$. 
The multiplication of the loop $L_h$ corresponding to $\sigma _h$ is determined by 
\begin{equation} (x_1,y_1,z_1) \ast (x_2,y_2,z_2)= \nonumber \end{equation}
\begin{equation} \label{multiplication1} \big(x_1+ e^{a z_1}(x_2 + h(x_1,y_1,z_1)[1-e^{(a-1)z_2}]), y_1+ e^{z_1}(y_2 -z_2 h(x_1,y_1,z_1)),z_1+z_2 \big). \end{equation}  
\newline
\noindent
c) Every continuous sharply transitive section $\sigma : G/H_3 \to G$ with the properties $\sigma (G/H_3)$ generates $G$ and 
$\sigma (H_3)=1$ is given by the map 
\[ \sigma _f: g(x,0,y,z) H_3 \mapsto g(x+e^{az} f(x,y,z),e^z f(x,y,z),y,z), \ a \neq 1, \] 
where $f: \mathbb R^3 \to \mathbb R $ is a continuous function with $f(0,0,0)=0$ such that $f$ does not fulfill the identities $f(x,y,0)=-x$ and $f(0,0,z)= c(1 -e^{-a z})$, $c \in \mathbb R$, simultaneously and such that for all triples $(x_1,y_1,z_1)$, 
$(x_2,y_2,z_2) \in \mathbb R^3$ there is a unique $x \in \mathbb R$ satisfying the equation 
\begin{equation}  \label{negyedikujequ} x= x_2- x_1 e^{a (z_2-z_1)}+ \nonumber \end{equation} 
\begin{equation}  e^{a z_2-z_1}[y_1 (z_2-z_1)+ (1- e^{(1-a) z_1}) f(x,y_2-e^{z_2-z_1} y_1,z_2-z_1)]. \end{equation} 
The multiplication of the loop $L_f$ corresponding to $\sigma _f$ is defined by 
\begin{equation} \label{loopszorzasketto} (x_1,y_1,z_1) \ast (x_2,y_2,z_2)= \nonumber \end{equation}
\begin{equation} \big(x_1+e^{a z_1}(x_2-y_2 z_1 e^{(a-1)z_2}+f(x_1,y_1,z_1)[1-e^{(a-1)z_2}]), y_1+ e^{z_1} y_2, z_1+z_2 \big). \end{equation}
\end{theorem}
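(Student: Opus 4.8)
The plan is to carry out everything from the explicit group law. Multiplying the matrices gives $g(x_1,x_2,x_3,x_4)\,g(y_1,y_2,y_3,y_4)=g(z_1,z_2,z_3,z_4)$ with $z_1=x_1+e^{ax_4}y_1$, $z_2=x_2+e^{x_4}y_2+x_4e^{x_4}y_3$, $z_3=x_3+e^{x_4}y_3$ and $z_4=x_4+y_4$. From this I would first establish the structural claims: a direct computation shows that only the identity commutes with every $g(y)$, so $Z(G)=1$; that the derived subgroup is the abelian codimension-one ideal $G'=\{g(x_1,x_2,x_3,0)\}\cong\mathbb R^3$; and that the one-dimensional connected normal subgroups are exactly the two root lines $N_1=\{g(x_1,0,0,0)\}$ and $N_2=\{g(0,x_2,0,0)\}$ belonging to the eigenvalues $a$ and $1$. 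Working in the Lie algebra and computing $\mathrm{Aut}(G)$, I would then list the one-dimensional subalgebras modulo automorphisms, separating $H\not\subseteq G'$ (which normalises to the transversal line $\{g(0,0,0,x_4)\}$) from $H\subseteq G'$ (which for $a\neq1$ normalises to one of $H_1,H_2,H_3$, these three orbits being merged into $H_1$ when $a=1$ by the extra symmetry produced by the coincidence of the eigenvalues). In each case one checks $C_G(H)=1$, so that $H$ is admissible as a loop stabiliser.

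The common mechanism for the positive parts is as follows. After fixing a coordinate cross-section for $G/H$, every coset meets the transversal in a single $H$-fibre, so a continuous section $\sigma$ is just a continuous choice of fibre parameter, encoded by one real function vanishing at the origin (which is exactly $\sigma(H)=1$). Its image $\Sigma=\sigma(G/H)$ is automatically a left transversal, and the loop axioms are equivalent to sharp transitivity, i.e. to the requirement that the left-division equation $\sigma(g_1H)\,g_2H=g_3H$ have, for every $g_2H,g_3H$, a unique solution $g_1H$. Substituting the section into the group law turns this equation into an explicit system in the coset coordinates; the coordinates that decouple are solved at once, while the remaining coupled relations are precisely (\ref{masodikequ}), (\ref{harmadikequ}) and (\ref{negyedikujequ}), and reading off $\sigma(g_1H)\,g_2H$ directly produces the multiplications (\ref{elsoszorzas}), (\ref{multiplication1}) and (\ref{loopszorzasketto}).

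For $H_1$ the division equation determines the last two coordinates explicitly and leaves a single scalar equation of the form $y-a_3e^{z}F(x,y,z)=\text{const}$ for the first; since this must have a unique solution $y$ for every value of the translation parameter $a_3$, the fibre function cannot depend on $y$, which forces $F=f(x,z)$ and explains why the $H_1$-loops depend on a function of only two variables, whereas no such reduction occurs for $H_2$ and $H_3$. In every case the demand that $\Sigma$ generate $G$ rather than a proper closed subgroup coincides, by a dimension count, with the demand that $L$ be a proper loop: a one-dimensional enlargement of the three-dimensional $\Sigma$ would already be all of $G$, so $\langle\Sigma\rangle\neq G$ can only happen when $\Sigma$ is itself a subgroup, i.e. when the section is a homomorphism. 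Imposing that the fibre function make $\sigma$ a homomorphism yields the additive relation (\ref{equegyenlet}) along the axes, whence Lemma \ref{functional} gives $f(0,z)=K(1-e^{-z})$ together with $f(\cdot,0)\equiv0$ (and the stated analogues for $H_2,H_3$); these are exactly the two identities the theorem forbids to hold simultaneously. Solvability of $L_f$ is then immediate, since by (\ref{elsoszorzas}) the last coordinate is a loop homomorphism onto $(\mathbb R,+)$ whose kernel $\{z=0\}$ is the abelian group $\mathbb R^2$.

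The transversal case $H=\{g(0,0,0,x_4)\}$ is treated separately and is where I expect the real difficulty. Here $G=G'\rtimes H$, so $G/H\cong G'$ and a section has the form $g(x_1,x_2,x_3,0)H\mapsto g(x_1,x_2,x_3,\phi(x_1,x_2,x_3))$; the obvious choice $\phi\equiv0$ is sharply transitive but has image $G'$, which fails to generate $G$. The task is to show that this obstruction is unavoidable: writing out the division equation one obtains a single scalar equation for $\phi$, and I would argue that its unique solvability for all data forces $e^{\phi}$ to be multiplicative along the one-parameter directions, i.e. forces $\phi$ (via the additive relation and Lemma \ref{functional}) into precisely the homomorphism form that makes $\Sigma$ a subgroup, while any genuinely non-homomorphic $\phi$ makes the equation either unsolvable or multiply solvable. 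Hence every sharply transitive section for this $H$ has image contained in a proper subgroup, so $G$ is never the group topologically generated by the left translations, and no such loop exists. The bookkeeping in this transverse case, and the monotonicity argument eliminating the $y$-dependence for $H_1$, are the two steps I expect to require genuine care; once the section forms are pinned down, the derivation of the multiplication formulas is routine.
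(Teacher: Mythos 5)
Your treatment of the three subgroups $H_1,H_2,H_3$ tracks the paper's actual proof closely: the same unique-decomposition parametrization of sections by one continuous function vanishing at the origin, the same translation of sharp transitivity into the equations (\ref{masodikequ}), (\ref{harmadikequ}), (\ref{negyedikujequ}), and the same monotonicity argument for $H_1$ (viewing $g(\psi_2)-g(\psi_1)$ as a linear function of $z_1$) to eliminate the $y$-dependence; the forbidden identities come, as in the paper, from Lemma \ref{functional}. Your dimension-count shortcut for the generation question ("$\langle\Sigma\rangle\neq G$ only if $\Sigma$ is itself a subgroup") replaces the paper's explicit computation with the subgroup $G_1$, the subset $F_2$, and conjugation by elements of $F_2$; it is essentially sound but needs a supplementary argument that the proper closed subgroup $S\supseteq\sigma(G/H)$ meets the one-parameter group $H$ trivially rather than in a nontrivial discrete subgroup (only then does each coset meet $S$ exactly once and $\Sigma=S$ follow). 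That is a repairable omission.

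The genuine gap is the nonexistence assertion for $H\not\subseteq G'$, precisely the step you yourself flag as "where I expect the real difficulty" and then leave as an unproven expectation ("I would argue that\dots"). Your proposed route is also aimed at the wrong dichotomy: it is not true that sharp transitivity merely forces $\phi$ into a homomorphism form --- there can be several subgroup transversals to $H$ (graphs of suitable homomorphisms), and being a subgroup transversal does not by itself decide sharp transitivity, since the stabilizer of $gH$ in such a subgroup $S$ is $S\cap gHg^{-1}$, which need not vanish; conversely you give no argument why a non-homomorphic $\phi$ must make the division equation unsolvable or multiply solvable. The paper settles the entire case with one structural observation absent from your proposal: every one-dimensional subalgebra of ${\bf g}$ not contained in ${\bf g}'$ is conjugate to ${\bf h}=\mathbb{R}e_4$, so every element of $G\setminus G'$ is conjugate into $H$ and hence fixes a point of $G/H$; on the other hand, sharp transitivity together with $\sigma(H)=1$ forces every nonidentity element of $\sigma(G/H)$ to act fixed-point-freely (take $g_1H=g_2H$ in the defining uniqueness condition), whence $\sigma(G/H)\subseteq G'$, which cannot generate $G$. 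Without this conjugacy/fixed-point argument, or a completed substitute for your functional-equation analysis of $\phi$, the first assertion of the theorem remains unproved in your write-up.
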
 
\begin{proof}  
The linear representation of the Lie group $G$ is given in \cite{ghanam}, p. 164. 
The Lie algebra ${\bf g}$ of $G$ is given by the basis $\{ e_1,e_2,e_3,e_4 \}$ with $[e_1,e_4]=a e_1$, $[e_2,e_4]=e_2$, $[e_3,e_4]=e_2+e_3$, 
$a \neq 0$.  As $\mathbb R e_1$ and $\mathbb R e_2$ are ideals of ${\bf g}$ the subalgebra ${\bf h}$ of the $1$-dimensional non-normal subgroup $H$ of $G$ does not contain $e_1$, $e_2$. First we assume that ${\bf h}$ is not contained in the commutator subalgebra ${\bf g}'= \langle e_1, e_2, e_3 \rangle $ of ${\bf g}$. Hence one has ${\bf h}= \mathbb R (a e_1+ b e_2+ c e_3+ e_4)$ with $a,b,c \in \mathbb R$. Using the automorphism $e_1 \mapsto e_1$, $e_2 \mapsto e_2$, $e_3 \mapsto e_3$ and $e_4 \mapsto e_4 + a e_1+ b e_2+ c e_3$ of ${\bf g}$ we can assume ${\bf h}= \mathbb R e_4$.  
Now we assume that there is a connected topological loop $L$ having the group $G$ as the group topologically generated by its left translations and 
$H =\{ \exp t e_4; t \in \mathbb R \}$ as the stabilizer of $e \in L$. As every $1$-dimensional subalgebra which is not contained in the commutator subalgebra of the Lie algebra ${\bf g}$ is conjugate to ${\bf h}$ every 
element of $G$ not contained in the commutator subgroup $G'$ of $G$ has a fixed point on $G/H$.  As the set $\sigma (G/H)$ is the set of the left translations of the loop $L$ and the left translations have no fixed point $\sigma (G/H)$ should be contained in $G'$. This is a contradiction to the fact that the left translations of $L$ generate $G$ and the first assertion is proved. 
\newline
\noindent
If the subalgebra ${\bf h}$ of $H$ is contained in the commutator subalgebra ${\bf g}'$, then $H$ has the form 
$H=\exp \ t (b_1 e_3 + b_2 e_1+ b_3 e_2)$, $t \in \mathbb R$, with $b_1 \neq 0$ or $b_2 b_3 \neq 0$. If $a \neq 1$, then each automorphism $\varphi $ of ${\bf g}$ is given by 
$\varphi (e_1)= k e_1$, $\varphi (e_2)= l e_2$, $\varphi (e_3)= n e_2 + l e_3$, $\varphi (e_4)= f_1 e_1+ f_2 e_2 + f_3 e_3+ e_4$ with $kl \neq 0$, $k,l,n,f_1,f_2,f_3 \in \mathbb R$.  
If $a=1$, then the automorphism group of ${\bf g}$ consists of the linear mappings
$\alpha (e_1)= k_1 e_1 + k_2 e_2$, $\alpha (e_2)= l e_2$, $\alpha (e_3)=n_1 e_1+ n_2 e_2 + l e_3$, $\alpha (e_4)= f_1 e_1+ f_2 e_2 + f_3 e_3+ e_4$ with $k_1 l \neq 0$, $k_1,k_2,l,n_1,n_2,f_1,f_2,f_3 \in \mathbb R$. If $b_1 \neq 0$ and $a \neq 1$, then we can change $H$ by an automorphism 
$\varphi $ of $G$ such that $H$ has one of the following forms  
\[H_1=\{ \exp t e_3; t \in \mathbb R \}, \quad H_2= \{ \exp t(e_3+e_1); t \in \mathbb R \}. \]
If $b_1 \neq 0$ and $a=1$, then using an automorphism $\alpha $ of $G$ we may assume that $H=H_1$. If $b_1=0$ and $a \neq 1$, then we can change $H$ by an automorphism $\varphi $ of $G$ such that $H$ is the subgroup 
$H_3= \{ \exp t(e_1+e_2); t \in \mathbb R \}$. If $b_1=0$ and $a=1$, then for all $b_2, b_3 \in \mathbb R$ the subgroup 
$H= \{ \exp \ t (b_2 e_1+ b_3 e_2); t \in \mathbb R \}$ is normal in $G$ which is a contradiction. 

First we deal with the case that $H=H_1=\{ g(0,0,k,0); \ k \in \mathbb R \}$. Since all elements of $G$ have a unique decomposition as 
$g(x, y, 0, z)  g(0, 0, k, 0)$, any continuous function
$f: \mathbb R^3 \to \mathbb R; (x,y, z) \mapsto f(x,y,z)$ determines a continuous section $\sigma : G/H_1 \to G$ given by
\[ \sigma: g(x,y,0,z) H_1 \mapsto  g(x, y, 0, z) g(0, 0, f(x,y,z), 0) = \]
\[ g(x,y+z e^z f(x,y,z), e^z f(x,y,z),z). \] 
The section $\sigma $ is sharply transitive if and only if for every triples $(x_1,y_1,z_1)$, $(x_2,y_2,z_2) \in \mathbb R^3$ there exists precisely one triple  $(x,y,z) \in \mathbb R^3$ such that 
\[ g(x,y+z e^z f(x,y,z), e^z f(x,y,z), z) g(x_1, y_1, 0, z_1)= g(x_2, y_2, 0, z_2) g(0,0,t,0) \] 
holds with a suitable $t \in \mathbb R$. This gives the equations
\[ z= z_2-z_1, \ x= x_2-x_1 e^{a (z_2-z_1)}, \ t= e^{-z_1} f(x_2-x_1 e^{a (z_2-z_1)},y,z_2-z_1), \]
\begin{equation} \label{equelso} 0= y -y_2 +y_1 e^{z_2-z_1}- z_1 e^{z_2-z_1} f(x_2-x_1 e^{a (z_2-z_1)},y,z_2-z_1). \nonumber \end{equation}
These are equivalent to the condition that for every $z_0=z_2-z_1$, $x_0= x_2-x_1 e^{a z_0}$ and $z_1 \in \mathbb R$ the function 
$g: y \mapsto y - z_1 e^{z_0} f(x_0, y, z_0): \mathbb R \to \mathbb R$ is a bijective mapping. Let be $\psi_1 < \psi_2 \in \mathbb R$ then 
$g(\psi_1) \neq g(\psi_2)$, e.g. $g(\psi _1) < g(\psi _2)$. We consider 
\[ 0 <  g(\psi _2) - g(\psi _1)= \psi _2 - \psi _1 - z_1 e^{z_0}[f(x_0, \psi_2, z_0) - f(x_0, \psi_1, z_0)] \]
as a linear function of $z_1 \in \mathbb R$. If $f(x_0, \psi_2, z_0) \neq f(x_0, \psi_1, z_0)$, then there exists a $z_1 \in \mathbb R$ 
such that $g(\psi _2) - g(\psi _1) =0$, which is a contradiction. Hence the function $f(x, y, z)=f(x,z)$ does not depend on $y$. In this case $g$ is a monotone function and every continuous function $f(x,z)$ with $f(0,0)=0$ determines a loop multiplication. 
\newline 
\noindent
This loop is proper precisely if the set 
\[ \sigma (G/H_1)=\{ g(x,y+z e^z f(x,z), e^z f(x,z),z); \\ x,y,z \in \mathbb R \}  \]
generates the whole group $G$.  
The set $\sigma (G/H_1)$ contains the subgroup  
\[ G_1=\{ g(x, y, f(x,0), 0); \ x,y \in \mathbb R \} < G'=[G,G], \]
and the subset $F_2=\{ g(0,z e^z f(0,z), e^z f(0,z),z); \ z \in \mathbb R \}$.  
As $G_1 \cap F_2= \{ 1 \}$ the set $\sigma (G/H_1)$ generates $G$ if the group $G_1$ has dimension $3$. This is the case if and only if the subgroup $G_2=\{ g(x,0,f(x,0),0); x \in \mathbb R \}$ is not a one-parameter subgroup. But $G_2$ is a one-parameter subgroup precisely if $f(x,0)= \lambda x$, $\lambda \in \mathbb R$. In this case the set $\sigma (G/H_1)$ generates $G$ if there exists an element $h \in F_2$ with 
$h^{-1} G_1 h \neq G_1$. For $h= g(0,z e^z f(0,z), e^z f(0,z),z) \in F_2$, where $z \neq 0$, we have 
\[ h^{-1} g(x, y, \lambda x, 0) h= g(x e^{-a z}, y e^{-z}-z e^{-z} \lambda x, e^{-z} \lambda x,0). \]
Hence $h^{-1} G_1 h =G_1$ if and only if for $a \neq 1$ one has $f(x,0)=0$ and for $a=1$ we have $f(x,0)=\lambda x$,  
$\lambda \in \mathbb R$. Using this for $a \neq 1$ the group $G_1$ reduces to $\widetilde{G_1}=\{ g(x,y,0,0); x,y \in \mathbb R \}$ and for $a=1$  
to  $G_1^{\ast }=\{ g(x,y, \lambda x,0); x,y \in \mathbb R \}$, $\lambda \in \mathbb R$. The group $\widetilde{G_1}$ and for $a=1$ the group  $G_1^{\ast }$ are normal subgroups of $G$. The set $\sigma (G/H)$ does not generate $G$ precisely if for $a \neq 1$ 
the set $F_2 \widetilde{G_1}/\widetilde{G_1}$, respectively for $a=1$ the set $F_2 G_1^{\ast }/G_1^{\ast }$ is a one-parameter subgroup of $G/\widetilde{G_1}$, respectively $G/G_1^{\ast }$. Since for $a \neq 1$ one has 
\[g(\mathbb R, \mathbb R, e^{z_1} f(0,z_1),z_1) g(\mathbb R, \mathbb R, e^{z_2} f(0,z_2),z_2)= \] 
\[g(\mathbb R, \mathbb R, e^{z_1+z_2} f(0,z_2)+e^{z_1} f(0,z_1), z_1+z_2) \]
and for $a=1$ we have 
\[g(\mathbb R, \mathbb R, e^{z_1}( \lambda x_1 +f(0,z_1)),z_1) g(\mathbb R, \mathbb R, e^{z_2}( \lambda x_2+ f(0,z_2)),z_2)= \] 
\[g(\mathbb R, \mathbb R, e^{z_1}[e^{z_2}(\lambda x_2+ f(0,z_2))+ \lambda x_1+f(0,z_1)], z_1+z_2) \]
the set $\sigma (G/H_1)$ does not generate $G$ if and only if in both cases $f(x,0)=0$ and for all $z_1, z_2 \in \mathbb R$ one has  $f(0,z_2)+e^{-z_2} f(0,z_1)=f(0,z_1+z_2)$.   
Using Lemma \ref{functional} for the real function $f(0,z)$ we obtain 
$f(0,z)= K(1- e^{-z})$, where $K \in \mathbb R$. 
Hence the set $\sigma (G/H_1)$ does not generate $G$ if for all $x,z \in \mathbb R$ one has $f(x,0)=0$ and $f(0,z)=K(1- e^{-z})$, $K \in \mathbb R$. 

Now we represent the loop $L_f$ in the coordinate system $(x,y,z) \mapsto g(x,y,0,z)H_1$.  Then the product $(x_1,y_1,z_1) \ast (x_2,y_2,z_2)$ will be determined if we apply $\sigma (g(x_1,y_1,0,z_1)H_1)=g(x_1,y_1+z_1 e^{z_1} f(x_1,z_1), e^{z_1} f(x_1,z_1),z_1)$ 
to the left coset 
$g(x_2,y_2,0,z_2)H_1$ and find in the image coset the element of $G$ which lies in the set $\{ g(x,y,0,z)H_1; \ x,y,z \in \mathbb R \}$. A direct computation gives multiplication (\ref{elsoszorzas}) in assertion a). 

The set $N=\{ (x,y,0); x,y \in \mathbb R \}$ is a normal subgroup isomorphic to $\mathbb R^2$ of the loop $L_f$ and one has $(0,0,z_1)N \ast (0,0,z_2)N= (0,0,z_1+z_2)N$. Hence the factor loop $L_f/N$ is isomorphic to the Lie group $\mathbb R$ and therefore the loop $L_f$ is solvable.

\noindent
Now we assume that $H=H_2=\{ g(k,0,k,0); \ k \in \mathbb R \}$. As all elements of $G$ can be written in a unique way  as 
$g(x, y, 0, z)  g(k, 0, k, 0)$, every continuous function
$h: \mathbb R^3 \to \mathbb R; (x,y, z) \mapsto h(x,y,z)$ determines a continuous section $\sigma : G/H_2 \to G$ defined by
\begin{equation} \label{section} \sigma_h: g(x,y,0,z) H_2 \mapsto  g(x, y, 0, z) g(h(x,y,z), 0, h(x,y,z), 0) = \nonumber \end{equation} 
\begin{equation}  g(x+e^{az} h(x,y,z),y+z e^{z} h(x,y,z), e^z h(x,y,z),z). \end{equation}  
The section $\sigma $ is sharply transitive if and only if for each triples $(x_1,y_1,z_1)$, $(x_2,y_2,z_2) \in \mathbb R^3$ there exists precisely one triple  $(x,y,z) \in \mathbb R^3$ such that 
\begin{equation} \label{elsoequ} g(x+e^{az} h(x,y,z),y+z e^{z} h(x,y,z), e^z h(x,y,z),z) g(x_1, y_1, 0, z_1)= \nonumber \end{equation} 
\begin{equation} g(x_2, y_2, 0, z_2) g(0,0,t,0) \end{equation}  
for a suitable $t \in \mathbb R$. Equation (\ref{elsoequ}) gives $z= z_2-z_1$, $t= e^{-z_1} h(x,y,z_2-z_1)$ and that equations (\ref{masodikequ}),   
(\ref{harmadikequ}) in assertion b) must have a unique solution $(x,y) \in \mathbb R^2$.  
\newline
\noindent 
Now we investigate under which circumstances the set $\sigma (G/H_2)$ generates the group $G$.  
The set $\sigma (G/H_2)$ contains the subgroup  
\[ G_1=\{ g(x+h(x,y,0), y, h(x,y,0), 0); \ x,y \in \mathbb R \} < G'=[G,G], \]
and the subset $F_2=\{ g(e^{az} h(0,0,z),z e^z h(0,0,z), e^z h(0,0,z),z); \ z \in \mathbb R \}$.  
One has $G_1 \cap F_2= \{ 1 \}$. The set $\sigma (G/H_2)$ generates $G$ if $\hbox{dim} \ G_1=3$. This happens if 
the group $G_2=\{ g(h(0,y,0), y, h(0,y,0), 0); y \in \mathbb R \}$ 
or the group $G_3=\{ g(x+h(x,0,0), 0, h(x,0,0), 0); x \in \mathbb R \}$ 
has dimension $2$. The group $G_2$, respectively $G_3$ is a one-parameter subgroup precisely if $h(0,y,0)=c y$, $c \in \mathbb R$, respectively 
$h(x,0,0)=b x$, $b \in \mathbb R$. As 
\[ g(x+h(x,0,0), 0, h(x,0,0), 0) g(h(0,y,0), y, h(0,y,0), 0)= \] 
\[ g(x+h(x,0,0)+h(0,y,0), y, h(x,0,0)+h(0,y,0), 0) \]
the group $G_1$ has dimension $3$ if and only if the function $h(x,y,0)$ is different from $b x+c y$, $b,c \in \mathbb R$. 
Assuming this the set $\sigma (G/H_2)$ generates $G$ if there exists an element $h \in F_2$ such that 
$h^{-1} G_1 h \neq G_1$ holds. For $h= g(e^{az} h(0,0,z),z e^z h(0,0,z), e^z h(0,0,z),z) \in F_2$ with $z \neq 0$ one has 
\[ h^{-1} g(x+b x+c y, y, b x+c y, 0) h= \]
\[ g([(b+1)x+cy] e^{-a z}, y e^{-z}-z (b x+c y) e^{-z}, (b x+c y) e^{-z},0). \] 
We obtain $h^{-1} G_1 h =G_1$ if and only if $b=c=0$. Then the group $G_1=\{ g(x, y, 0, 0); \ x,y \in \mathbb R \}$ is a normal subgroup of $G$.   
The set $\sigma (G/H_2)$ generates $G$, if the set 
$(F_2 G_1)/G_1$ is not a one-parameter subgroup of $G/G_1$. Because of
\begin{equation} g(\mathbb R, \mathbb R, e^{z_1} h(0,0,z_1), z_1) g(\mathbb R , \mathbb R, e^{z_2} h(0,0,z_2), z_2)= \nonumber \end{equation}
\begin{equation} g(\mathbb R, \mathbb R, e^{z_1+z_2} h(0,0,z_2)+e^{z_1} h(0,0,z_1),z_1+z_2), \nonumber \end{equation} 
the set $\sigma (G/H_2)$ does not generate $G$ precisely if for all $z_1, z_2 \in \mathbb R$ the equality 
$h(0,0,z_2)+e^{-z_2} h(0,0,z_1)= h(0,0,z_1+z_2)$ 
holds. Using Lemma \ref{functional} for the function $h(0,0,z)$ we get $h(0,0,z)= K(1 -e^{-z})$ with $K \in \mathbb R$. Therefore the loop $L_h$ 
is proper if and only if the function $h: \mathbb R^3 \to \mathbb R$ does not satisfy the identities $h(x,y,0)=0$ and $h(0,0,z)= K(1 -e^{-z})$ 
simultaneously. 
\newline
\noindent
The multiplication of the loop $L_h$  in the coordinate system
$(x,y,z) \mapsto g(x,y,0,z)H_2$ is determined if we apply 
$\sigma _h(g(x_1,y_1,0,z_1) H_2)$ given by (\ref{section}) 
 to the left coset $g(x_2,y_2,0,z_2)H_2$ and find in the image coset the element of $G$ which lies in the set 
 $\{g(x,y,0,z)H_2; \ x,y,z \in \mathbb R \}$.
A direct computation yields multiplication (\ref{multiplication1}) of assertion b). 

\noindent
Now we consider the case that $H=H_3=\{ g(k,k,0,0); \ k \in \mathbb R \}$. As all elements of $G$ have a unique decomposition as 
$g(x,0,y,z)  g(k,k,0,0)$, any continuous function
$f: \mathbb R^3 \to \mathbb R; (x,y, z) \mapsto f(x,y,z)$ determines a continuous section $\sigma : G/H_3 \to G$ given by
\[ \sigma : g(x,0,y,z) H_3 \mapsto  g(x,0,y,z) g(f(x,y,z), f(x,y,z), 0, 0) = \]
\[ g(x+e^{az} f(x,y,z),e^z f(x,y,z),y,z). \] 
The set $\sigma (G/H_3)$ acts sharply transitively on the factor space $G/H_3$ if and only if for every triples $(x_1,y_1,z_1)$, $(x_2,y_2,z_2) \in \mathbb R^3$ there exists precisely one triple  $(x,y,z) \in \mathbb R^3$ such that 
\begin{equation} \label{negyedikequ} g(x+e^{az} f(x,y,z),e^z f(x,y,z),y,z) g(x_1,0,y_1,z_1)= \nonumber \end{equation} 
\begin{equation} g(x_2,0, y_2,z_2) g(t,t,0,0) \end{equation}  
for a suitable $t \in \mathbb R$. Equation (\ref{negyedikequ}) yields that 
$z= z_2-z_1$, $y= y_2-e^{z_2-z_1} y_1$, 
$t= y_1 (z_2-z_1) e^{-z_1}+ e^{-z_1} h(x,y_2-e^{z_2-z_1} y_1,z_2-z_1)$ and equation (\ref{negyedikujequ}) in assertion c) must have a unique solution $x \in \mathbb R$.  
\newline 
\noindent 
The set $\sigma (G/H_3)$ contains the subgroup  
\[ G_1=\{ g(x+f(x,y,0), f(x,y,0), y, 0); \ x,y \in \mathbb R \} < G'=[G,G], \]
and the subset $F_2=\{ g(e^{az} f(0,0,z), e^z f(0,0,z), 0,z); \ z \in \mathbb R \}$.  
We have $G_1 \cap F_2= \{ 1 \}$. The set $\sigma (G/H_3)$ generates $G$ if $G_1$ is a $3$-dimensional group.  This is the case if 
the subgroup 
\[G_2=\{ g(x+f(x,0,0), f(x,0,0), 0, 0); x \in \mathbb R \} \] 
or the subgroup 
\[G_3=\{ g(f(0,y,0), f(0,y,0), y, 0); y \in \mathbb R \} \] 
has dimension $2$. The group $G_2$, respectively $G_3$ is a one-parameter subgroup precisely if $f(x,0,0)=c x$, $c \in \mathbb R$, respectively  $f(0,y,0)=d y$, $d \in \mathbb R$. As 
\[ g(x+f(x,0,0), f(x,0,0), 0, 0) g(f(0,y,0), f(0,y,0), y, 0)= \] 
\[ g(x+f(x,0,0)+f(0,y,0), f(x,0,0)+f(0,y,0), y, 0) \]
we have $\hbox{dim} \ G_1=3$ if and only if the function $f(x,y,0)$ is different from $c x+d y$, $c,d \in \mathbb R$. 
In this case the set $\sigma (G/H_3)$ generates $G$ if there exists an element $h \in F_2$ such that 
$h^{-1} G_1 h \neq G_1$ holds. For $h= g(e^{az} f(0,0,z), e^z f(0,0,z), 0,z) \in F_2$ with $z \neq 0$ we have 
\[ h^{-1} g(x+c x+d y, c x+d y, y, 0) h= \]
\[ g([(c+1)x + dy] e^{-a z}, (-z y + c x+d y)) e^{-z}, e^{-z} y, 0). \] 
Hence $h^{-1} G_1 h =G_1$ if and only if one has $c=-1$ and $d=0$. Then the group $G_1=\{ g(0, -x, y, 0); \ x,y \in \mathbb R \}$ is a normal subgroup of $G$ and the set $\sigma (G/H_3)$ generates $G$, if the set 
$(F_2 G_1)/G_1$ is not a one-parameter subgroup of $G/G_1$. Because of
\begin{equation} g(e^{a z_1} f(0,0,z_1), \mathbb R, \mathbb R,  z_1) g(e^{a z_2} f(0,0,z_2), \mathbb R, \mathbb R,  z_2)= \nonumber \end{equation}
\begin{equation} g(e^{a (z_1+z_2)} f(0,0,z_2)+ e^{a z_1} f(0,0,z_1), \mathbb R, \mathbb R, z_1+z_2), \nonumber \end{equation} 
the set $\sigma (G/H_3)$ does not generate $G$ precisely if for all $z_1, z_2 \in \mathbb R$ the identity 
$f(0,0,z_2)+e^{-a z_2} f(0,0,z_1)= f(0,0,z_1+z_2)$  
holds. Using Lemma \ref{functional} for the function $f(0,0,z)$ we obtain $f(0,0,z)=c (1-e^{-a z})$ with a real constant $c \in \mathbb R$.  
A direct computation yields that the multiplication of the loop $L_f$ corresponding to the section $\sigma _f$ in the coordinate system $(x,y,z) \mapsto g(x,0,y,z) H_3$ is given by (\ref{loopszorzasketto}) and the assertion c) is proved. \end{proof} 

\begin{theorem} \label{Propmasodik} The $4$-dimensional connected solvable Lie group $G$ which has trivial center and precisely two one-dimensional normal subgroups is not the multiplication group of connected topological proper loops. 
\end{theorem}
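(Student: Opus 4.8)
The plan is to suppose that $G = Mult(L)$ for some connected topological proper loop $L$ and to reach a contradiction by combining Lemma \ref{niemenmaa} with the classification established in Theorem \ref{Propelso}. Since $G = Mult(L)$ acts transitively on the $3$-dimensional loop $L = G/Inn(L)$ with point stabilizer $Inn(L)$, the orbit--stabilizer relation gives $\dim Inn(L) = \dim G - \dim L = 1$. The criterion of \cite{kepka} requires $Inn(L)$ to have trivial core in $G$, so this $1$-dimensional subgroup is not normal; and because $L$ is homeomorphic to $\mathbb{R}^3$, the stabilizer $Inn(L)$ is connected (from the homotopy sequence of $Inn(L) \to G \to L$), hence corresponds to a $1$-dimensional subalgebra $\mathbf{h}$ of $\mathbf{g}$. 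As $G$ has trivial center, $Z(Mult(L)) = 1$, so Lemma \ref{niemenmaa} forces $N_G(Inn(L)) = Inn(L)$; that is, $Inn(L)$ must be self-normalizing.

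Next I would carry out the (routine) normalizer computation in $\mathbf{g}$ for the non-normal $1$-dimensional subalgebras already isolated in the proof of Theorem \ref{Propelso}. Each of $H_1 = \mathbb{R}e_3$, $H_2 = \mathbb{R}(e_1+e_3)$ and $H_3 = \mathbb{R}(e_1+e_2)$ lies in the abelian ideal $\mathbf{g}' = \langle e_1,e_2,e_3\rangle$, so $\mathbf{g}'$ centralizes it and $N_{\mathbf{g}}(\mathbf{h}) \supseteq \mathbf{g}'$ is $3$-dimensional; thus none of $H_1,H_2,H_3$ is self-normalizing and all three are excluded. For the remaining class $\mathbf{h} = \mathbb{R}e_4$ every bracket $[x,e_4]$ lies in $\mathbf{g}'$ while $\mathbf{g}' \cap \mathbb{R}e_4 = 0$, so normalizing $\mathbb{R}e_4$ forces $[x,e_4] = 0$, and since $e_4$ is centralized only by $\mathbb{R}e_4$ we get $N_{\mathbf{g}}(\mathbb{R}e_4) = \mathbb{R}e_4$. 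Hence the sole surviving possibility is $Inn(L) = \exp(\mathbb{R}e_4)$ up to conjugacy.

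To eliminate this last case I would return to the set of left translations of $L$, which is a sharply transitive section on $L = G/Inn(L)$ with $Inn(L) = \exp(\mathbb{R}e_4)$. By the fixed-point argument in the first assertion of Theorem \ref{Propelso}, every element of $G$ outside the commutator subgroup $G'$ fixes a point of $G/Inn(L)$, whereas a left translation $\lambda_a$ with $a \neq e$ is fixed-point free; therefore all left translations lie in $G'$. Consequently the subgroup $\Lambda \le G$ topologically generated by the left translations satisfies $\Lambda \le G'$, so $\dim \Lambda \le 3$, while transitivity of $\Lambda$ on $L$ gives $\dim \Lambda \ge 3$. Thus $\Lambda = G'$ is a $3$-dimensional closed connected (hence simply connected, being a closed connected subgroup of the simply connected solvable group $G$) subgroup acting transitively on the simply connected $3$-manifold $L$ with discrete stabilizer $\Lambda \cap Inn(L)$; this stabilizer is therefore trivial, $\Lambda$ acts simply transitively, the left translations exhaust $\Lambda$, and $L$ is a group. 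This contradicts the assumption that $L$ is proper, completing the proof.

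The step I expect to be the main obstacle is exactly the elimination of $\mathbb{R}e_4$. The normalizer criterion coming from Lemma \ref{niemenmaa} disposes of $H_1, H_2, H_3$ immediately, but it does \emph{not} rule out $\mathbb{R}e_4$, since that subalgebra is genuinely self-normalizing: being self-normalizing is necessary but not sufficient for a subgroup to be an inner mapping group. The decisive extra input is the structural fact, borrowed from Theorem \ref{Propelso}, that with stabilizer $\exp(\mathbb{R}e_4)$ the left translations are trapped inside the $3$-dimensional commutator subgroup $G'$, which collapses the left-translation group to dimension $3$ and forces $L$ to be a group. The only points demanding care are the connectedness of $Inn(L)$ and the simple connectivity used in the final transitivity argument, both of which rest on $L$ being homeomorphic to $\mathbb{R}^3$.
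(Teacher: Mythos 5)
Your treatment of the $3$-dimensional case is correct and coincides with the paper's core argument: Lemma \ref{niemenmaa} together with $Z(G)=1$ forces $Inn(L)$ to be self-normalizing, while each of $H_1,H_2,H_3$ lies in the $3$-dimensional abelian normal subgroup $G'$, so that $N_G(H_i)=G'$ is $3$-dimensional --- exactly the contradiction the paper exhibits. In one respect you are actually more careful than the paper: the paper eliminates the remaining conjugacy class $\exp (\mathbb{R}e_4)$ implicitly, by appealing to the classification of Theorem \ref{Propelso}, which is formulated under the hypothesis that the left translations topologically generate all of $G$; when one only assumes $Mult(L)=G$, the left translation group could a priori be a proper subgroup, so that appeal needs the supplement you provide. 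Your fixed-point argument (left translations are fixed-point free, every element outside $G'$ has a fixed point on $G/\exp (\mathbb{R}e_4)$, hence the left translation group equals $G'\cong\mathbb{R}^3$ and acts sharply transitively, making $L$ a group) closes this case correctly; an even shorter variant is to note that the right translations are also fixed-point free, so $Mult(L)\le G'\neq G$, contradicting $Mult(L)=G$ directly.

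The genuine gap is at the very start: you assume without justification that $L$ is $3$-dimensional. The theorem is asserted for \emph{all} connected topological proper loops, and the paper's proof opens precisely by disposing of the lower dimensions: a $1$-dimensional connected topological loop whose multiplication group is a Lie group is itself a Lie group (Theorem 18.18 in \cite{loops}), and the multiplication group of a $2$-dimensional connected topological proper loop, if a Lie group, is nilpotent (Theorem 1 in \cite{figula00}), which excludes $G$ since a nontrivial nilpotent Lie group has nontrivial center whereas $Z(G)=1$. You should also exclude $\dim L=4$, i.e.\ discrete $Inn(L)$: the inner mapping group of a connected topological loop is topologically generated by maps such as $\lambda_{x\cdot y}^{-1}\lambda_x\lambda_y$, which form connected families through the identity as $(x,y)$ ranges over $L\times L$, so a discrete $Inn(L)$ is trivial and $L$ is then a group. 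This last observation also repairs a small circularity in your argument: you derived the connectedness of $Inn(L)$ from the homotopy sequence using $L\cong\mathbb{R}^3$, but that homeomorphism itself rests on facts (simple connectedness of $L$, solvability of $G$) that are only available once the stabilizer is known to be closed and connected; generating $Inn(L)$ by connected families through the identity gives its connectedness without any such appeal.
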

\begin{proof}  
Every $1$-dimensional connected topological loop $L$ having a Lie group as its multiplication group is already a Lie group (cf. Theorem 18.18 in \cite{loops}, p. 248). If the multiplication group of a $2$-dimensional connected topological proper loop is a Lie group, then it is nilpotent 
(cf. Theorem 1 in \cite{figula00}).  
Since the group $G$ has trivial center all $3$-dimensional connected topological loop $L$ having the group $G$ as the group topologically generated by the left translations of $L$ are  homeomorphic to $\mathbb R^3$ and their multiplications are given by (\ref{elsoszorzas}), (\ref{multiplication1}) and (\ref{loopszorzasketto}) in Theorem \ref{Propelso}. Denote by $H_i$, $i=1,2,3$, the subgroups of $G$ given in Theorem \ref{Propelso}. 
If the group $G$ is also the group generated by all left and right translations of $3$-dimensional topological loops, then the inner mapping group of $L$ is the group $H_1$ if $L$ is defined by (\ref{elsoszorzas}), the group $H_2$ if $L$ is defined by (\ref{multiplication1}) and the group $H_3$ if $L$ is given by (\ref{loopszorzasketto}). Since the commutator subgroup $G'$ of $G$ is a $3$-dimensional abelian normal subgroup of $G$ and each subgroup $H_i$, $i=1,2,3$, is contained in $G'$ the normalizer of $H_i$ in $G$ coincides with the group $G'$. As  the Lie algebra ${\bf g}$ of the group $G$ has trivial center we have a contradiction to Lemma \ref{niemenmaa} and the assertion follows. \end{proof}

Author's address: Institute of Mathematics, University of Debrecen,\\
Debrecen, H-4010, P.O.Box: 12, Hungary \\
E-mail: figula@math.klte.hu


\begin{thebibliography}{37}

\bibitem{andrada} A. Andrada, M. L. Barberis, I. G. Dotti and G. P. Ovando, Product structures on four dimensional solvable Lie algebras. 
{\em Homology Homotopy Appl.} {\bf 7} (1), 2005, 9-37.   

\bibitem{bruck} R. H. Bruck, Contributions to the Theory of Loops. {\em Trans. Amer. Math. Soc.} {\bf 60}, 1946, 245-354.  

\bibitem{niem3} P. Cs\"org\"o and M. Niemenmaa, On connected transversals to nonabelian subgroups. {\em European J. Combin.} {\bf 23}, 2002, 179-185. 

\bibitem{figula00} \'A. Figula, The multiplication groups of $2$-dimensional topological loops. {\em J. Group Theory} {\bf 12}, 2009, 419-429. 

\bibitem{figula0} \'A. Figula, Topological loops with three-dimensional solvable left translation group. {\em Aequat. Math.} {\bf 79}, 2010, 83-97. 

\bibitem{figula} \'A. Figula, On the multiplication groups of 3-dimensional topological loops. accepted for publication {\em J. Lie theory}, 2010, pp. 31. 

\bibitem{figula2} \'A. Figula, Three-dimensional topological loops with solvable multiplication groups. manuscript, 2010, pp. 39. 

\bibitem{ghanam} R. Ghanam, I. Strugar and G. Thompson, Matrix Representation for Low Dimensional Lie Algebras. {\em Extracta Math.} {\bf 20}, 2005, no. 2, 151-184. 

\bibitem{gorbatsevich} V. V. Gorbatsevich, On three-dimensional homogeneous spaces. {\em Sib. Math. Zh.} {\bf 18}, 1977, 280-293. English transl.: {\em Sib. Math. J.} {\bf 18}, 200-210.  


\bibitem{niem4} M. Niemenmaa, Finite loops with dihedral inner mapping groups are solvable. {\em J. Algebra} {\bf 273}, 2004, 288-294. 

\bibitem{niem1} M. Niemenmaa, On connected transversals to subgroups whose order is a product of two primes. {\em European J. Combin.} {\bf 18}, 1997, 915-919. 

\bibitem{kepka} M. Niemenmaa and T. Kepka, On Multiplication Groups of Loops. {\em J.  Algebra} {\bf 135}, 1990, 112-122. 

\bibitem{niem2} M. Niemenmaa and A. Vesanen, On connected transversals in the projective special linear group $PSL(2,7)$. {\em J. Algebra} {\bf 166},  1994, 455-460. 

\bibitem{loops} P. T. Nagy and K. Strambach, {\it Loops in Group Theory and Lie Theory}, Walter de Gruyter 35. Berlin, New York, 2002.   

\bibitem{ovando} G. Ovando, Invariant complex structures on solvable real Lie groups. {\em Manuscr. Math.} {\bf 103}, 2000, 19-30.  

\bibitem{patera} J. Patera, R. T. Sharp, P. Winternitz and H. Zassenhaus, Invariants of real low dimension Lie algebras. {\em J. Math. Phys.} {\bf 17} (6), 1976, 986-994. 

\bibitem{vesanen} A. Vesanen, Finite classical groups and multiplication groups of loops. {\em Math. Proc. Camb. Philos. Soc.} {\bf 117} 1995, 425-429. 

\end{thebibliography}
\end{document}